\numberwithin{equation}{section}
\def\pmod #1{\ ({\rm{mod}}\ #1)}
\newtheorem{theorem*}{Theorem}
\newtheorem{lemma*}{Lemma}
\theoremstyle{plain}
\newtheorem{theorem}{Theorem}
\newtheorem{lemma}{Lemma}
\newtheorem{proposition}{Proposition}
\newtheorem{problem}{Problem}
\theoremstyle{definition}
\newtheorem{remark}{Remark}
\begin{document}

\title
[{On three problems of Y.--G. Chen}] {On three problems of Y.--G. Chen}

\author
[Y. Ding] {Yuchen Ding}

\address{(Yuchen Ding) School of Mathematical Science,  Yangzhou University, Yangzhou 225002, People's Republic of China}
\email{ycding@yzu.edu.cn}

\thanks{*Corresponding author}
\keywords{square--full numbers; composite numbers; Dirichlet's theorem in arithmetic progressions; primes; complete sequences} \subjclass[2010]{Primary 11A41.}

\begin{abstract}  
In this short note, we answer two questions of Chen and Ruzsa negatively and answer a problem of Ma and Chen affirmatively. 
\end{abstract}
\maketitle

\baselineskip 18pt
\section{Two problems on $r$--full numbers}
The irrationalities of certain type numbers are of great interest to mathematicians. People have known for a long time that $e$ and $\pi$ are irrational (see e.g. \cite{Murty2}). Following the formulae of Euler involving Bernoulli's numbers (see e.g. \cite{Murty2}), we know that $$\zeta(2s)=\sum_{n=1}^{\infty}\frac1{n^s}$$ are irrational numbers for all positive even integers $s$. It wasn't until 1978 that Ap\'ery \cite{Ap} gave a miraculous proof for the irrationality of $\zeta(3)$.  For an alternative proof of the irrationality of $\zeta(3)$, one can also refer to the note of Beukers \cite{Beukers}. In a remarkable paper, Rivoal \cite{Ri} showed that there are infinitely many irrational values of the Riemann zeta function $\zeta(s)$ at odd integers $s$. Rivoal \cite{Ri2} further proved that at least one of the nine numbers $\zeta(5),\zeta(7),...,\zeta(21)$ is irrational, which was then sharpen by Zudilin \cite{Zu} by displaying that at least one of the four numbers $\zeta(3),\zeta(5),\zeta(7),\zeta(9)$ is irrational. 

In another direction,  Erd\H os \cite{Erdos} proved that if $a_n\in \mathbb{N}$ and $$\lim_{n\rightarrow\infty}(a_{n+1}-a_n)=\infty,$$ 
then 
$$\sum_{n=1}^{\infty}\frac{a_n}{2^{a_n}}$$
is irrational. It is surely that the gaps between the square--free numbers do not satisfy the above requirement. Thus, Erd\H os \cite{Erdos} further conjectured that 
$$\sum_{n~\text{square--free}}\frac{n}{2^n}$$
is also irrational. This conjecture was later resolved by Chen and Ruzsa \cite{CR}.  Now, let's fix some terminologies which shall be used in the context below. If $p^r\nmid n$ for any prime $p$, then the integer $n$ is said to be $r$--free. An integer $n$ is called $r$--full if $p|n$ implies $p^r|n$ for any prime $p$. Conventionally, $2$--free and $2$--full numbers are called square--free and square--full numbers, respectively. 
In fact, Chen and Ruzsa \cite[Theorem 4]{CR} provided the following more general result.

\begin{proposition}[Chen--Ruzsa]
Let $r,\ell$ be two integers with $r,\ell\ge 2$ and $\{a_n\}$ a sequence of distinct positive integers such that each $a_n$ is either $r$--free or square--full. Then $$\sum_{n=1}^{\infty}a_n\ell^{-a_n}$$ is irrational.
\end{proposition}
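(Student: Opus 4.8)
The plan is to argue by contradiction, combining the familiar ``clearing of denominators'' with the arithmetic of $r$--free and square--full integers.

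Suppose $S:=\sum_{n\ge1}a_n\ell^{-a_n}=p/q$. After replacing $q$ by its largest divisor coprime to $\ell$ (which alters only finitely many initial terms) we may assume $\gcd(q,\ell)=1$; re--index so that $a_1<a_2<\cdots$ and put $g_n=a_{n+1}-a_n\ge1$. For each large $n$ set
\[
R_n\;=\;q\ell^{a_n}S-q\sum_{m\le n}a_m\ell^{a_n-a_m}\;=\;q\sum_{m>n}a_m\ell^{a_n-a_m}.
\]
The first expression exhibits $R_n$ as an integer; the second shows $R_n>0$ and, since the $a_m$ with $m>n$ are distinct integers exceeding $a_n$, that $1\le R_n\le q\sum_{k\ge1}(a_n+k)\ell^{-k}\le q(a_n+2)$. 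As $\ell^{g_n}R_n=qa_{n+1}+R_{n+1}$ as well, $\ell^{g_n}$ divides the positive integer $R_{n+1}+qa_{n+1}\le2q(a_{n+1}+1)$, so $\ell^{g_n}\le2q(a_{n+1}+1)$; thus $g_n=O(\log a_n)$ and hence $a_n=O(n\log n)$. Since the square--full integers up to $t$ number $O(\sqrt t)$, it follows that at most $O(\sqrt{a_n})=o(n)$ of the indices $m\le n$ can have $a_m$ square--full, so for a set of $m$ of density $1$ the integer $a_m$ is $r$--free.

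The second ingredient is structural. By the Chinese Remainder Theorem the congruences $x\equiv-i\pmod{p_i^{\,r}}$ for $1\le i\le L$ (with $p_i$ the $i$--th prime) have infinitely many solutions $x$, and for any such $x$ none of $x+1,\dots,x+L$ is $r$--free; since moreover the square--full numbers are sparse, all but a density--zero subset of these $x$ also have no $x+i$ square--full. Hence $A$ contains blocks of arbitrarily many consecutive missing integers, so $\limsup_n g_n=\infty$ even though $g_n=O(\log a_n)$; and because the complement of $\{r\text{--free}\}\cup\{\text{square--full}\}$ has density $1-\zeta(r)^{-1}>0$, the set $A$ also omits a positive proportion of all integers.

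It remains to force a contradiction out of these two facts, and this is where the real work lies. If $g_n\to\infty$ one is in the setting of Erd\H os's theorem recalled in the introduction, whose argument (carried out in base $\ell$) shows $S$ irrational; so suppose instead that $\liminf_n g_n$ is finite and attained infinitely often, so that $A$ alternates forever between bounded--gap stretches and occasional long gaps. The recursion $R_{n+1}=\ell^{g_n}R_n-qa_{n+1}$ expands by the factor $\ell^{g_n}\ge2$, which makes the orbit $(R_n)$ highly rigid inside the box $1\le R_n\le q(a_n+2)$: over a gap of length $g_n$ the value $R_n$ must lie in the interval $[(qa_{n+1}+1)\ell^{-g_n},(2qa_{n+1}+2)\ell^{-g_n}]$, of length only $\asymp qa_n\ell^{-g_n}$, so across a long structural gap the integer $R_n$ is squeezed into $\asymp qa_n\ell^{-g_n}=a_n^{1-o(1)}$ values instead of the trivial $\asymp qa_n$, after which each subsequent $g_m$ is pinned to $O(1)$ admissible choices by the requirement that $R_m$ stay in its box. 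The plan is to amplify these partial rigidities over a long stretch of indices --- using the identity $a_{n+1}=(\ell^{g_n}R_n-R_{n+1})/q$ together with the density--$1$ $r$--freeness of the $a_m$ from the first paragraph --- until the positivity constraint $R_m\ge1$ is violated. I expect this last amplification to be the main obstacle: the logarithmic upper bound on $g_n$ and the merely $\log t/\log\log t$--sized structural gaps are too close for a crude size comparison to do the job, so one genuinely has to exploit the integrality of the $R_n$ and the precise arithmetic constraint on the $a_n$, not just their rates of growth, to close the argument.
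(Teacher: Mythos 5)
The first thing to say is that the paper never proves this proposition: it is quoted verbatim as Theorem 4 of Chen and Ruzsa \cite{CR}, so there is no in-paper argument to compare you with. Your attempt therefore has to stand on its own, and it does not --- it is a correct setup followed by an honest admission that the decisive step is missing.

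What you do establish is sound: the integrality and positivity of $R_n=q\sum_{m>n}a_m\ell^{a_n-a_m}$, the recursion $R_{n+1}=\ell^{g_n}R_n-qa_{n+1}$, the bounds $1\le R_n\le q(a_n+2)$ and $g_n=O(\log a_n)$, hence $a_n=O(n\log n)$; the $O(\sqrt{t})$ count of square-full numbers; and the Chinese Remainder construction of arbitrarily long blocks of consecutive integers that are neither $r$--free nor square--full. Your diagnosis is also correct that the resulting gaps of length $\asymp\log x/\log\log x$ fall just short of the length $\asymp\log x/\log\ell$ that would force $R_n<1$ outright. But that diagnosis is exactly where the difficulty of the theorem begins, and your final paragraph replaces the argument with a plan (``amplify these partial rigidities \ldots\ until the positivity constraint is violated'') that names no mechanism: you never specify what quantity is being amplified, why the ``$O(1)$ admissible choices'' of each subsequent $g_m$ cannot all be realized by genuine $r$--free or square--full integers, or how the integrality of $R_n$ is to interact with the arithmetic of the $a_n$ to produce a contradiction. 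The case split is likewise lopsided: the branch $g_n\to\infty$ is legitimately delegated to Erd\H os's theorem, but the complementary branch --- which is the entire content of the proposition beyond Erd\H os --- is left open. As written, the proposal proves nothing that Erd\H os's theorem does not already give; the essential new idea of Chen and Ruzsa (whose proof is precisely what led them to pose Problems \ref{po2} and \ref{po3} about the $r$--fullness of $\ell^m+i$) is absent, so this is a genuine gap rather than a stylistic difference.
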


Thinking about their proof, Chen and Ruzsa naturally raised the following two problems \cite[Section 3. Problems]{CR}.
\begin{problem}\label{po2}
Let $r,\ell\ge 2$ be two given integers. Then for any $k\ge 1$, is there a positive integer $m$ such that each of $\ell^m+i~(1<i<k)$ is $r$--full?
\end{problem}
\begin{problem}\label{po3}
Let $r,\ell\ge 2$ be two given integers. Then for any $k\ge 1$, is there a positive integer $m$ such that $\ell^m+k$ is $r$--full?
\end{problem}
Without doubt, Problem \ref{po2} implies Problem \ref{po3}. We shall show that the answer to Problem \ref{po3} is negative, and hence the answer to Problem \ref{po2} is negative either. That is,
\begin{theorem}\label{thm1}
Let $r,\ell\ge 2$ be two given integers. Then there exists a positive integer $k$ such that $\ell^m+k$ is not $r$--full for any positive integer $m$.
\end{theorem}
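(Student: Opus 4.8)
The plan is to exhibit an explicit value of $k$ (depending on $\ell$ and $r$) for which some fixed prime always divides $\ell^m+k$ to a power lying strictly between $0$ and $r$; since a number divisible by a prime to exactly the first power is not $r$-full once $r\ge 2$, this settles the theorem. The construction splits according to whether $\ell$ is squarefree, and the only inputs are the elementary valuation fact $v_p(a+b)=\min\{v_p(a),v_p(b)\}$ whenever $v_p(a)\ne v_p(b)$ (with $v_p$ the $p$-adic valuation) together with the Chinese Remainder Theorem.

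\emph{Case 1: $\ell$ is not squarefree.} Then there is a prime $p$ with $a:=v_p(\ell)\ge 2$, and I would take $k=p$. For every $m\ge 1$ we have $v_p(\ell^m)=ma\ge 2>1=v_p(p)$, hence $v_p(\ell^m+p)=1$; thus $p$ exactly divides $\ell^m+p$, and $\ell^m+p$ is not $r$-full. This case is immediate.

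\emph{Case 2: $\ell$ is squarefree} (so $\ell\ge 2$ and $v_q(\ell)=1$ for every prime $q\mid\ell$). Fix a prime $q\mid\ell$ and a prime $p_0\nmid\ell$, and use the Chinese Remainder Theorem to choose a positive integer $c$ with $\gcd(c,\ell)=1$ and $c+1\equiv p_0\pmod{p_0^2}$ (the two conditions are compatible since $p_0\nmid\ell$); this forces $v_{p_0}(c+1)=1$. Put $k=\ell c$. For $m\ge 2$ we have $\ell^m+k=\ell(\ell^{m-1}+c)$, and since $q\mid\ell^{m-1}$ while $q\nmid c$, we get $v_q(\ell^m+k)=v_q(\ell)+v_q(\ell^{m-1}+c)=1+0=1$. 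For $m=1$ we have $\ell+k=\ell(c+1)$, and since $p_0\nmid\ell$ while $v_{p_0}(c+1)=1$, we get $v_{p_0}(\ell+k)=1$. In either case $\ell^m+k$ is divisible by a prime to exactly the first power, so it is not $r$-full.

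The one spot that needs a little thought — and the natural candidate for the main obstacle — is $m=1$ in the squarefree case: taking $c$ merely coprime to $\ell$ is not enough, as $\ell(c+1)$ may itself be $r$-full (for instance $\ell=2$, $c=1$ gives $\ell(c+1)=4$), and when $\ell$ is even one cannot repair this by also demanding $q\nmid c+1$ for $q\mid\ell$. Bringing in the auxiliary prime $p_0\nmid\ell$ and pinning $v_{p_0}(c+1)=1$ circumvents the difficulty; alternatively one may invoke Dirichlet's theorem to make $c+1$ a prime lying in a residue class coprime to $\ell$, which renders $\ell(c+1)$ squarefree. Beyond this, every step reduces to the two elementary facts noted above, so no genuinely hard point remains.
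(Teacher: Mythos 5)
Your proof is correct, and in the squarefree case it takes a genuinely lighter route than the paper. The overall skeleton is the same: both arguments split on whether $\ell$ is squarefree, handle the non-squarefree case identically with $k=p$ for a prime $p$ with $p^2\mid\ell$, and in the squarefree case take $k=\ell c$ so that $\ell^m+k=\ell(\ell^{m-1}+c)$ factors, treating $m=1$ and $m\ge 2$ separately via exact divisibility by a single prime. The difference is in how the $m=1$ subcase is secured. The paper invokes Dirichlet's theorem on primes in arithmetic progressions to make $c+1$ a prime $q_*$ coprime to $\ell$ (so that $q_*$ exactly divides $\ell(c+1)$), and its $m\ge 2$ argument needs an \emph{odd} prime factor $q$ of $\ell$ (the contradiction there is $q\mid 2$), which forces a separate ad hoc case for $\ell=2$ with $k=10$. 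You instead pin down $v_{p_0}(c+1)=1$ for an auxiliary prime $p_0\nmid\ell$ by the Chinese Remainder Theorem, which is entirely elementary, and your $m\ge 2$ step only uses $q\mid\ell^{m-1}$, $q\nmid c$, so it works verbatim for $q=2$ and no special case for $\ell=2$ is needed. Your version is thus both more elementary (CRT in place of Dirichlet) and more uniform (two cases instead of three); the paper's choice of a prime value for $c+1$ buys slightly more (it makes $\ell(c+1)$ squarefree outright) but that extra strength is not needed for the theorem. The one detail worth stating explicitly is that $c$ can be taken positive, so that $k=\ell c$ is a positive integer; this is immediate since the CRT residue class contains arbitrarily large positive members.
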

To prove Theorem \ref{thm1}, we need the following well-known result due to Dirichlet (see for example \cite[Page 34, Chapter 4]{Da}).
\begin{lemma}[Dirichlet]\label{lemma}  Let $a$ and $b$ be integers such that $(a,b)=1$, then there are infinitely many primes $p\equiv a\pmod{b}$.
\end{lemma}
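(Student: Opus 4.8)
The plan is to run the classical analytic proof of Dirichlet's theorem via characters and $L$-functions. First I dispose of the case $b=1$, where the claim is just Euclid's infinitude of primes, and assume $b\ge 2$. To each of the $\phi(b)$ Dirichlet characters $\chi$ modulo $b$ --- the completely multiplicative functions on $\Z$ that are periodic with period $b$ and vanish on integers not coprime to $b$ --- I attach the $L$-function $L(s,\chi)=\sum_{n\ge 1}\chi(n)n^{-s}$, which for $\Re s>1$ converges and factors as the Euler product $\prod_p(1-\chi(p)p^{-s})^{-1}$. The motor of the proof is the orthogonality relation: for $(a,b)=1$ the quantity $\phi(b)^{-1}\sum_\chi\overline{\chi(a)}\,\chi(n)$ equals $1$ when $n\equiv a\pmod b$ and $0$ otherwise.

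Next I take the logarithm of each Euler product. Writing $\log L(s,\chi)=\sum_p\sum_{k\ge 1}\chi(p)^k/(k\,p^{ks})$ and separating the $k=1$ term, the remaining $k\ge 2$ tail is bounded by $2\zeta(2s)$ and hence stays $O(1)$ as $s\to 1^+$, so $\log L(s,\chi)=\sum_p\chi(p)p^{-s}+O(1)$. Multiplying by $\overline{\chi(a)}$, summing over $\chi$, and invoking orthogonality isolates the progression:
\[
\sum_{p\equiv a\pmod b}\frac1{p^s}=\frac1{\phi(b)}\sum_\chi\overline{\chi(a)}\log L(s,\chi)+O(1)\qquad(s\to 1^+).
\]
Everything now hinges on the behavior of the individual terms $\log L(s,\chi)$ as $s\to 1^+$.

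The principal character $\chi_0$ supplies the divergence I want: $L(s,\chi_0)$ agrees with $\zeta(s)$ up to the finitely many Euler factors at primes dividing $b$, so it inherits the simple pole of $\zeta$ at $s=1$ and $\log L(s,\chi_0)\to+\infty$. For every non-principal $\chi$, partial summation together with the boundedness of the partial sums $\sum_{n\le x}\chi(n)$ shows that $L(s,\chi)$ extends to a function holomorphic on $\Re s>0$, so $L(1,\chi)$ is a finite number and the term $\log L(s,\chi)$ stays bounded as $s\to 1^+$ precisely when $L(1,\chi)\ne 0$. Granting the nonvanishing $L(1,\chi)\ne 0$ for all non-principal $\chi$, the right-hand side above is dominated by the $\chi_0$-term and tends to $+\infty$; since a finite partial sum over primes could never diverge, there must be infinitely many primes $p\equiv a\pmod b$.

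The genuine obstacle is the nonvanishing $L(1,\chi)\ne 0$. For a complex character ($\chi\ne\overline\chi$) I argue through the product $\prod_\chi L(s,\chi)$: its logarithm is a Dirichlet series with nonnegative coefficients (coming from $\sum_\chi\chi(p)^k=\phi(b)$ whenever $p^k\equiv 1\pmod b$), so the product is $\ge 1$ for real $s>1$ and cannot tend to $0$ as $s\to 1^+$. The factor $L(s,\chi_0)$ already contributes a simple pole there; if a complex $\chi$ had $L(1,\chi)=0$ then so would $\overline\chi$, producing two zeros against one pole and forcing the product to $0$ --- a contradiction. The delicate case is a real non-principal character, where the conjugate trick collapses. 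Here I invoke Landau's theorem on Dirichlet series with nonnegative coefficients applied to $\zeta(s)L(s,\chi)=\sum_n a_n n^{-s}$ with $a_n=\sum_{d\mid n}\chi(d)$: one checks $a_n\ge 0$ for all $n$ and $a_n\ge 1$ on perfect squares, so the series dominates $\zeta(2s)$ and must diverge somewhere on $\Re s>1/2$; were $L(1,\chi)=0$, the zero would cancel the pole of $\zeta$ and make $\zeta(s)L(s,\chi)$ holomorphic on $\Re s>0$, contradicting Landau's principle that the abscissa of convergence is a singularity. This single nonvanishing fact is the crux of the whole argument.
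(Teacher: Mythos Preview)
Your sketch is correct and is precisely the classical analytic proof via Dirichlet characters and $L$-functions: orthogonality isolates the progression, the principal character supplies the divergence through the pole of $\zeta(s)$ at $s=1$, and the whole argument reduces to $L(1,\chi)\ne 0$ for non-principal $\chi$, which you handle in the standard way (the product trick for complex $\chi$, and Landau's theorem applied to $\zeta(s)L(s,\chi)$ for real $\chi$). The details you give are accurate.

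There is nothing to compare against in the paper itself, however: the lemma is stated without proof and simply attributed to Dirichlet with a reference to Davenport's \emph{Multiplicative Number Theory}, Chapter~4. In other words, the paper treats this as a black-box input. Your outline is in fact exactly the argument one finds at the cited location in Davenport, so in that sense you have reproduced the proof the paper defers to rather than offered a genuinely different route.
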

\begin{proof}[Proof of Theorem \ref{thm1}]
The proof of our theorem will be divided into three cases.

{\bf Case I.} Assuming first $\ell=2$, we consider the number $2^m+10$. If $m=1$, then $2^1+10=12$ is not $2$--full, hence not $r$-full. If $m\ge 2$, then $2|2^m+10$ but $2^2\nmid 2^m+10$, which means that $2^m+10$ is not $2$--full for $m\ge 2$, hence not $r$--full either.

{\bf Case II.} We next assume that there exists a prime $p$ such that $p^2|\ell$, in which case we choose $k=p$. It is sure that $p|\ell^m+p$ but $p^2\nmid \ell^m+p$. Hence, $\ell^m+p$ is not $2$--full for any positive integers $m$, hence not $r$--full either.

{\bf Case III.} Finally, we assume that $\ell$ is a square--free number with at least one odd prime factor $q$. By Dirichlet's theorem in arithmetic progressions, i.e., Lemma \ref{lemma}, there is some integer $s\ge 2$ such that $\ell s-1$ is another prime $q_*$, from which we know that $(q_*,\ell)=1$. Let $k=\ell(q_*-1)$. Then
\begin{align}\label{eq1}
\ell^m+k=\ell(\ell^{m-1}+q_*-1).
\end{align}
If $m=1$, then $\ell+k=\ell q_*$. Thus we have $q_*|\ell+k$ but $q_*^2\nmid \ell+k$ since $(q_*,\ell)=1$.  If $m\ge 2$, then $q|\ell^m+k$ by Eq. (\ref{eq1}). Recall that $\ell$ is square--free, it follows that $q^2\nmid \ell^m+k$ since otherwise we will have $$q|\ell^{m-1}+q_*-1$$
again by Eq. (\ref{eq1}). Thus, we have $$q|q_*-1\Rightarrow q|\ell s-2,$$ from which we deduce that $q|2$. This is a contradiction with $q\ge 3$. 
Therefore, $\ell+k$ is not $2$--full for any $m$, hence not $r$--full either.
\end{proof}

\section{A problem on complete sequences}
Let $\mathbb{N}$ be the set of nonnegative integers. Suppose that $A$ is a sequence of nonnegative integers whose terms are not necessarily different. Let $P(A)$ be the set of nonnegative integers which can be represented as the sum of finitely distinct terms of $A$. The sequence $A$ is said to be complete if all sufficiently large integers belong to $P(A)$. Conventionally, we declare that $0\in P(A)$. Burr \cite{Bu} asked that which subsets $B$ of $\mathbb{N}$ are equal to $P(A)$ for a given sequence $A$. This problem remains widely open but some partial results had already been obtained, see for example \cite{Bi,Br,CF1,FL,Gr,He2,He3}. For $S=\{s_1,s_2,s_3,...\}$ and $\alpha>0$, let $$S_\alpha=\left\{\lfloor \alpha s_1\rfloor,\lfloor \alpha s_2\rfloor,...\right\}.$$ Typically, Hegyv\'ari \cite{He1} proved the following interesting result. 

\begin{proposition}[Hegyv\'ari] \label{proposition2-1}
Let $S=\{s_1<s_2<\cdot\cdot\cdot\}$ be a sequence of positive integers such that $$\lim_{n\rightarrow\infty}(s_{n+1}-s_n)=\infty$$ and $s_{n+1}<\gamma s_n$ for all sufficiently large integers $n$, where $1<\gamma<2$.  Suppose that $S_\alpha$ is complete for some $\alpha>0$, then there exists a positive number $\delta$ such that $S_\beta$ are complete for all $\beta\in[\alpha,\alpha+\delta).$
\end{proposition}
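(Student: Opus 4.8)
The plan is to reduce completeness of $S_\beta$ to a \emph{finite certificate} and then perturb the certificate from $\alpha$ to nearby $\beta$. The elementary engine is an interval‑gluing step: if an interval $[u,v]$ is contained in $P(\{\lfloor\beta s_1\rfloor,\dots,\lfloor\beta s_K\rfloor\})$ and $\lfloor\beta s_{K+1}\rfloor\le v-u+1$, then $[u,v+\lfloor\beta s_{K+1}\rfloor]\subseteq P(\{\lfloor\beta s_1\rfloor,\dots,\lfloor\beta s_{K+1}\rfloor\})$. Iterating, $S_\beta$ is complete provided there is an index $K$ and an interval $[N_0,v]\subseteq P(\{\lfloor\beta s_1\rfloor,\dots,\lfloor\beta s_K\rfloor\})$ for which the greedy extension never stalls, i.e.\ $\lfloor\beta s_{K+t+1}\rfloor\le(v-N_0+1)+\sum_{j=1}^{t}\lfloor\beta s_{K+j}\rfloor$ for all $t\ge0$; then every integer $\ge N_0$ lies in $P(S_\beta)$ because $\sum_n\lfloor\beta s_n\rfloor=\infty$.

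Next I would show that this greedy extension is automatically self‑sustaining from a fixed index onwards, \emph{uniformly} for $\beta$ in a bounded interval around $\alpha$; this is where $1<\gamma<2$ and $s_{n+1}-s_n\to\infty$ enter. Since $s_{n+1}<\gamma s_n$ for large $n$, one has $\sum_{j=n-C}^{n}s_j\ge s_n\cdot\frac{1-\gamma^{-(C+1)}}{1-\gamma^{-1}}$, and because $\gamma<2$ the limiting constant $\gamma^{-1}/(1-\gamma^{-1})=1/(\gamma-1)$ exceeds $1$; so for a suitable $C=C(\gamma)$ the block sum $\sum_{j=n-C}^{n}s_j$ beats $s_{n+1}$ by a definite multiplicative margin. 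Multiplying by $\beta$ costs only $O(C)$ from the floors, and since $s_n/n\to\infty$ (the gaps diverge) even the accumulated floor loss $O(n)$ is absorbed. The conclusion is: there are $C=C(\gamma)$ and $I_0=I_0(\gamma,\{s_n\},\alpha)$ with $\lfloor\beta s_{n+1}\rfloor\le\sum_{j=n-C}^{n}\lfloor\beta s_j\rfloor$ for every $\beta\in[\alpha,\alpha+1]$ and every $n\ge I_0$; hence, once $K\ge I_0$ and the certificate interval is long enough, the extension cannot stall (for $t\ge C$ the block sum is a sub‑sum of $(v-N_0+1)+\sum_{j=1}^{t}\lfloor\beta s_{K+j}\rfloor$, while for $0\le t<C$ one uses the same margin).

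It remains to produce a certificate for $S_\alpha$ and transfer it. Completeness of $S_\alpha$ gives $N_0$ with $[N_0,\infty)\subseteq P(S_\alpha)$ and (a standard consequence, else integers just below $\lfloor\alpha s_{n+1}\rfloor$ would be unrepresentable) $\lfloor\alpha s_{n+1}\rfloor\le\sum_{j\le n}\lfloor\alpha s_j\rfloor+1$ for large $n$. Combining this with the quantitative form of Step 2 — namely $\sum_{j\le n}\lfloor\alpha s_j\rfloor-\lfloor\alpha s_{n+1}\rfloor\to\infty$, so in particular it eventually exceeds the constant $N_0$ — one shows by induction that, past some index $K_0$, the subset sums of $\{\lfloor\alpha s_1\rfloor,\dots,\lfloor\alpha s_K\rfloor\}$ cover the \emph{whole} interval $[N_0,\sum_{j\le K}\lfloor\alpha s_j\rfloor]$; this is a certificate for $S_\alpha$ at every $K\ge\max(K_0,I_0)$. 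Finally, fix one such $K$ and set $\delta:=\min_{1\le i\le K}\bigl((\lfloor\alpha s_i\rfloor+1)/s_i-\alpha\bigr)$, which is positive since $1-\{\alpha s_i\}>0$. For $\beta\in[\alpha,\alpha+\delta)$ one has $\lfloor\beta s_i\rfloor=\lfloor\alpha s_i\rfloor$ for all $i\le K$ (each $\beta s_i$ stays in $[\lfloor\alpha s_i\rfloor,\lfloor\alpha s_i\rfloor+1)$); moreover $\delta\le 1/s_K$, so $\lfloor\beta s_m\rfloor-\lfloor\alpha s_m\rfloor=O_\gamma(1)$ also for $m=K+1,\dots,K+C+1$. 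Hence the $S_\alpha$‑certificate at $K$ is still a certificate for $S_\beta$ (the $O_\gamma(1)$ discrepancies being swallowed by the diverging margins of Steps 2–3), and the greedy extension carries $S_\beta$ to completeness.

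The genuine obstacle is the inductive claim in Step 3 — upgrading ``the covered interval eventually reaches every integer'' to ``the covered interval is, past some index, an honest initial segment up to the full partial sum.'' The danger is a bounded ``defect'' between $\sum_{j\le K}\lfloor\alpha s_j\rfloor$ and the top of the covered interval, of size comparable to $N_0$; ruling it out for large $K$ will require the $\gamma<2$ slack to dominate $N_0$ together with a careful bookkeeping argument (using the symmetry $x\in P(\{\lfloor\alpha s_j\rfloor\}_{j\le K})\iff\sum_{j\le K}\lfloor\alpha s_j\rfloor-x\in P(\cdot)$, and the fact that such a defect cannot persist once the terms outgrow the current interval length). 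Everything else — the interval‑gluing, the self‑sustaining tail, and the ``freeze‑up'' transfer — is routine.
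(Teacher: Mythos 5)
A preliminary remark: the paper states this proposition without proof, quoting it from Hegyv\'ari \cite{He1}, so there is no internal argument to compare yours with; I am judging the proposal on its own terms. Your Steps 1--2 and the ``freeze--up'' device are sound. The interval--gluing step is the standard Brown--type criterion; the inequality $\sum_{j=n-C}^{n}s_j>(1+\epsilon)s_{n+1}$ for suitable $C=C(\gamma)$ rests exactly on $\gamma/(\gamma-1)>\gamma$, i.e.\ on $\gamma<2$; the $O(C)$ loss from the floors is absorbed because $\beta s_{n+1}\ge\alpha s_{n+1}\to\infty$ (your appeal to $s_n/n\to\infty$ there is unnecessary); and $\delta=\min_{i\le K}\bigl(\bigl(\lfloor\alpha s_i\rfloor+1\bigr)/s_i-\alpha\bigr)>0$ does freeze $\lfloor\beta s_i\rfloor=\lfloor\alpha s_i\rfloor$ for $i\le K$. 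One simplification: writing $b_n=\lfloor\beta s_n\rfloor$, the slack $R_t=(v-N_0+1)+\sum_{j=1}^{t}b_{K+j}-b_{K+t+1}$ satisfies $R_{t+1}-R_t=2b_{K+t+1}-b_{K+t+2}\ge(2-\gamma)b_{K+t+1}-\gamma>0$ for large $K$, so the greedy extension can never stall after it succeeds once, and your block bookkeeping for $0\le t<C$ is not needed.

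The genuine gap is the one you flag yourself in Step 3, and it is not bookkeeping: it is the entire content of the theorem. What completeness of $S_\alpha$ gives for free is only that every $x$ with $N_0\le x<\lfloor\alpha s_{K+1}\rfloor$ is represented using terms of index $\le K$, hence $[N_0,\lfloor\alpha s_{K+1}\rfloor-1]\subseteq P(\{\lfloor\alpha s_i\rfloor\}_{i\le K})$. This interval has length $\lfloor\alpha s_{K+1}\rfloor-N_0$, while the very first gluing step for $S_\beta$ requires length at least $\lfloor\beta s_{K+1}\rfloor\ge\lfloor\alpha s_{K+1}\rfloor$; the deficit is about $N_0$ and does not shrink as $K\to\infty$, so no choice of $K$ repairs it. You must therefore prove that the $O(N_0)$ integers in $[\lfloor\alpha s_{K+1}\rfloor,\lfloor\alpha s_{K+1}\rfloor+N_0]$ also lie in $P(\{\lfloor\alpha s_i\rfloor\}_{i\le K})$, and neither of your suggested tools does this: the reflection $x\mapsto\sigma_K-x$ (with $\sigma_K=\sum_{i\le K}\lfloor\alpha s_i\rfloor$) produces the second covered interval $[\sigma_K-\lfloor\alpha s_{K+1}\rfloor+1,\ \sigma_K-N_0]$, which joins the first one only when $\sigma_K\le2\lfloor\alpha s_{K+1}\rfloor-1$, whereas $\sigma_K\ge(1/(\gamma-1)-o(1))\lfloor\alpha s_{K+1}\rfloor$, which far exceeds $2\lfloor\alpha s_{K+1}\rfloor$ once $\gamma<3/2$. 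Moreover, you never deploy the hypothesis $\lim_{n\to\infty}(s_{n+1}-s_n)=\infty$ where it is actually needed: it forces $\lfloor\alpha s_{K+2}\rfloor-\lfloor\alpha s_{K+1}\rfloor\to\infty$, so that every $x\le\lfloor\alpha s_{K+1}\rfloor+N_0$ is represented using indices $\le K+1$ only, which is the natural entry point for eliminating the deficit. That the missing step is substantive and not cosmetic is underscored by the literature: removing the hypothesis $s_{n+1}-s_n\to\infty$ was the content of the separate paper \cite{CF2} of Chen and Fang, so an argument that never uses it in an essential way would in effect be reproving their stronger theorem. Until the interval extension past $\lfloor\alpha s_{K+1}\rfloor+N_0$ is actually established, the proposal does not prove the proposition.
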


The unnecessary condition $$\lim_{n\rightarrow\infty}(s_{n+1}-s_n)=\infty$$ was then removed by Chen and Fang \cite{CF2}. In a subsequent article, Ma and Chen \cite{MC} proved that the prerequisite is always valid in Hegyv\'ari's Theorem. Motivated by

\begin{proposition}[Ma--Chen] \label{proposition2-2}
Let $S=\{s_1<s_2<\cdot\cdot\cdot\}$ be a sequence of positive integers such that $s_{n+1}\le 2s_n$ for all sufficiently large integers $n$. Suppose that $P(S_\alpha)\neq \mathbb{N}$ for any $0<\alpha<1$. Then for any positive integer $w$, there exist a real number $0<\alpha<1$ and a positive integer $n$ such that
$$\left\lfloor \alpha s_{n+i}\right\rfloor=2^i \quad (i=0,1,...,w).$$
\end{proposition}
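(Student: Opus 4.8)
The plan is to reduce the conclusion to a purely arithmetic statement about $\{s_n\}$ and then to extract that statement from the hypothesis $P(S_\alpha)\neq\mathbb{N}$ (for all $\alpha\in(0,1)$) by contradiction, using the classical completeness criterion. First I would reformulate: $\lfloor\alpha s_{n+i}\rfloor=2^i$ holds precisely when $\alpha\in[\,2^i/s_{n+i},\,(2^i+1)/s_{n+i}\,)$, and since $s_{n+i+1}\le 2s_{n+i}$ for all large $n$ the left endpoints $2^i/s_{n+i}$ increase with $i$, so the intersection over $i=0,\dots,w$ of these half-open intervals equals $[\,2^w/s_{n+w},\,\min_{0\le i\le w}(2^i+1)/s_{n+i}\,)$, which for large $n$ lies inside $(0,1)$ whenever it is nonempty. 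Hence it suffices to produce one large $n$ with
\[
2^w s_{n+i}<(2^i+1)s_{n+w}\qquad(i=0,1,\dots,w-1).
\]
(For $w=1$ this is just $s_n<s_{n+1}$; the content is in $w\ge 2$.)

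Next, suppose these inequalities fail for every large $n$. Then for each large $n$ there is $i=i(n)\in\{0,\dots,w-1\}$ with $\prod_{j=i}^{w-1}(s_{n+j+1}/s_{n+j})\le 2^w/(2^i+1)$; taking base-$2$ logarithms and using $s_{k+1}\le 2s_k$ yields $\sum_{j=n}^{n+w-1}\log_2(2s_j/s_{j+1})\ge\log_2(1+2^{-(w-1)})=:\varepsilon_w>0$ for all large $n$, and telescoping gives $s_{n+w}\le\theta s_n$ for all large $n$ with a fixed constant $\theta=2^{w-\varepsilon_w}<2^{w}$. In particular $s$ never grows by a factor $2^{w}$ over $w$ consecutive steps, so among consecutive values of $\lfloor\alpha s_k\rfloor$ a doubling block $1,2,4,\dots,2^{w-1}$ may appear but nothing longer.

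Finally I would construct $\alpha^*\in(0,1)$ with $P(S_{\alpha^*})=\mathbb{N}$, contradicting the hypothesis. Recall that a nondecreasing sequence $c_1\le c_2\le\cdots$ of positive integers has every nonnegative integer as a subset sum iff $c_1=1$ and $c_{j+1}\le 1+\sum_{i\le j}c_i$ for all $j$. Writing $m=m(\alpha)$ for the first index with $\alpha s_m\ge1$ — for $\alpha$ small this first value is automatically $1$, since $\alpha s_m\le 2\alpha s_{m-1}<2$ — it is therefore enough to secure $\lfloor\alpha s_{k+1}\rfloor\le 1+\sum_{j=m}^{k}\lfloor\alpha s_j\rfloor$ for all $k\ge m$. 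Since $\lfloor\alpha s_{k+1}\rfloor\le 2\lfloor\alpha s_k\rfloor+1$ always, this can fail only at the end of a doubling block $1,2,\dots,2^{t}$ of the values $\lfloor\alpha s_j\rfloor$, and only when $\lfloor\alpha s_{k+1}\rfloor$ overshoots to $2^{t+1}+1$. For $t\le w-2$ the admissible interval for $\alpha$ leaves enough room to keep $\lfloor\alpha s_{k+1}\rfloor\le 2^{t+1}$ (it is nonempty because $s_{k+1}\le 2s_k<(2+2^{-t})s_k$), whereas after a maximal block $1,2,\dots,2^{w-1}$ the bound $s_{n+w}\le\theta s_n$ from the previous step forces $\lfloor\alpha s_{k+1}\rfloor\le 2^{w}-1$ on its own, with slack $\ge 1$ that then propagates. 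Choosing $\alpha^*$ from the resulting nested family of intervals completes the argument.

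The main obstacle is this last step, and inside it the delicate point is the bookkeeping of the accumulated floor errors in $\sum_{j=m}^{k}\lfloor\alpha s_j\rfloor$ — especially across long slow stretches, where the values $\lfloor\alpha s_j\rfloor$ repeat and must be summed level by level rather than term by term — so as to guarantee that the nested intervals do not shrink to the empty set. It is precisely the failure of the reformulated inequalities at every large $n$ (equivalently, the uniform bound $s_{n+w}\le\theta s_n$) that supplies the margin making this possible; the reformulation and the growth bound are routine.
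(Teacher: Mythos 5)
First, a point of comparison: the paper itself contains no proof of Proposition~\ref{proposition2-2} --- it is quoted as a known result of Ma and Chen \cite{MC} --- so your attempt can only be judged on its own terms. Your first two steps are sound: the equivalence of $\lfloor\alpha s_{n+i}\rfloor=2^i$ ($i=0,\dots,w$) with the nonemptiness of $\bigl[2^w/s_{n+w},\,\min_{i}(2^i+1)/s_{n+i}\bigr)$ for large $n$, and the deduction that failure of the corresponding inequalities at every large $n$ forces $s_{n+w}\le\theta s_n$ with a fixed $\theta<2^w$, are both correct and set up a sensible proof by contradiction via Brown's completeness criterion.

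The gap is in the third step, and it is not merely that the hard bookkeeping is deferred: the structural claim the whole scheme rests on is false. You assert that $\lfloor\alpha s_{k+1}\rfloor\le 1+\sum_{j=m}^{k}\lfloor\alpha s_j\rfloor$ can fail \emph{only} at the end of a doubling block $1,2,\dots,2^t$, with an overshoot to $2^{t+1}+1$. In fact the criterion fails whenever the accumulated slack $\sum_{j=m}^{k-1}c_j-c_k$ drops below $-1$, and every step with $c_{k+1}=2c_k+1$ drains this slack by exactly one unit, so failures can occur far from any exact doubling block. Concretely, take $\alpha=1/100$ and $s=(150,250,450,790,1560,3100,\dots)$: all consecutive ratios are at most $2$, the values $\lfloor\alpha s_j\rfloor$ are $1,2,4,7,15,31$, and $31>1+(1+2+4+7+15)=30$, so completeness fails although the configuration $1,2,\dots,2^t,2^{t+1}+1$ never occurs (the block already breaks at the value $7$). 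Your nested-interval construction, which only intervenes to prevent that one configuration, would therefore admit such $\alpha$ and the claimed contradiction with $P(S_{\alpha^*})=\mathbb{N}$ would not materialize. A correct argument has to control the slack across arbitrary runs of near-doubling steps and show that the bound $s_{n+w}\le\theta s_n$ replenishes it often enough --- precisely the part you flag as delicate but do not carry out. As it stands the proposal is a plausible strategy outline with a broken key lemma, not a proof.
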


Ma and Chen posed the following problem \cite[Problem 3.2]{MC}.
\begin{problem}\label{problem1} Does there exist a sequence $S=\{s_1,s_2,...\}$ of positive integers with $s_n<s_{n+1}\le 2s_n$ and a positive integer $m$ such that for each $0<\alpha<1$, 
$$\{2^i:0\le i\le m\}\not\subseteq S_\alpha?$$
\end{problem}

They further remarked that $S=\{n^2:n=1,2,...\}$ is not the required sequence for their problem. Since Ma and Chen did not indicate any idea or detail of their claim, the author would like to offer an alternative proof of it for convenience of the readers.
The aim is to prove that for any integer $m$, there exists $0<\alpha<1$ such that $$\{2^i:0\le i\le m\}\subseteq \left\{\left\lfloor\alpha n^2\right\rfloor:n=1,2,...\right\}.$$
The argument goes as follows: Let $\alpha$ be the number satisfying the requirement. For any $0\le i\le m$, it suffices to find some $n_i$ with $2^i=\left\lfloor\alpha n_i^2\right\rfloor$, which is equivalent to
$$\sqrt{\alpha^{-1}2^i}\le n_i<\sqrt{\alpha^{-1}(2^i+1)}.$$
The above inequalities would be satisfied if we have $$\sqrt{\alpha^{-1}}\left(\sqrt{2^i+1}-\sqrt{2^i}\right)>1 \quad (0\le i\le m).$$
The proof ends up with the choice of $\alpha=4^{-1}(2^m+1)^{-1}$.

As we mentioned in the abstract, the answer to Problem \ref{problem1} is affirmative. 
\begin{theorem}\label{thm2-1} Let 
$$S=\left\{\left\lfloor \left(\frac{3}{2}\right)^n\right\rfloor:n=1,2,3,...\right\}.$$
Then
$$\left\{2^3,2^{4}\right\}\not\subseteq S_\alpha$$
for any $0<\alpha<1$.
\end{theorem}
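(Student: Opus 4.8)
The plan is to argue by contradiction, exploiting that $S$ grows like a geometric progression of ratio $3/2$, which is too lacunary to meet both required windows at once. (Note $S$ clearly satisfies $s_n<s_{n+1}\le 2s_n$, so the theorem does settle Problem~\ref{problem1}, with $m=4$, since $\{2^3,2^4\}\subseteq\{2^i:0\le i\le 4\}$.)

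First I would record the structural facts about $S$. Writing $s_n=\lfloor(3/2)^n\rfloor$, one checks $(s_n)_{n\ge1}$ is strictly increasing: for $n\ge2$ the gap $(3/2)^{n+1}-(3/2)^n=\tfrac12(3/2)^n$ exceeds $1$, while $s_1=1<2=s_2$. Hence $S=\{s_1<s_2<\cdots\}$ and $S_\alpha=\{\lfloor\alpha s_n\rfloor:n\ge1\}$. Assume now $\{2^3,2^4\}\subseteq S_\alpha$ for some $0<\alpha<1$. Then $8\le\alpha s_n<9$ and $16\le\alpha s_m<17$ for suitable indices $n,m$; putting $x=1/\alpha>1$ this reads $8x\le s_n<9x$ and $16x\le s_m<17x$.

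The core step is to show that, given $8x\le s_n<9x$, the set $S$ cannot meet the interval $[16x,17x)$, contradicting $16x\le s_m<17x$. For the right end, since $s_n\le(3/2)^n<s_n+1$, we get $s_{n+1}\le(3/2)^{n+1}<\tfrac32(s_n+1)<\tfrac{27}{2}x+\tfrac32$, which is $<16x$ as soon as $x>\tfrac35$; thus $s_{n+1}<16x$. For the left end, $s_{n+2}>(3/2)^{n+2}-1=\tfrac94(3/2)^n-1\ge\tfrac94 s_n-1\ge 18x-1$, which exceeds $17x$ since $x>1$; thus $s_{n+2}>17x$. Therefore $s_{n+1}<16x\le s_m<17x<s_{n+2}$, so $s_m$ is a term of $S$ lying strictly between the consecutive terms $s_{n+1}$ and $s_{n+2}$ of the strictly increasing enumeration of $S$ — impossible. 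This contradiction yields $\{2^3,2^4\}\not\subseteq S_\alpha$ for every $0<\alpha<1$.

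I do not expect a genuine obstacle: the only delicate point is that the $\pm1$ slack lost to the floor functions must not destroy the two key strict inequalities, and it does not, because they rest on the numerical facts $\tfrac32\cdot 9=\tfrac{27}{2}<16$ and $\tfrac94\cdot 8=18>17$ — equivalently $3/2<16/9$ and $(3/2)^2=9/4>17/8$ — which hold with room to spare for all $x>1$. This is precisely the arithmetic coincidence that makes the exponents $3,4$ and the base $3/2$ cooperate.
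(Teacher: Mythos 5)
Your proof is correct and takes essentially the same route as the paper: assume $2^3=\lfloor\alpha s_n\rfloor$, multiply the resulting window by $3/2$ and $9/4$ to show that $s_{n+1}$ falls strictly below and $s_{n+2}$ strictly above the window $[16/\alpha,17/\alpha)$ needed for $2^4$, and invoke the monotonicity of the sequence to conclude. The only cosmetic difference is that you work with $x=1/\alpha$ and compare the $s_j$ directly to $16x$ and $17x$, whereas the paper bounds $\lfloor\alpha s_{k+1}\rfloor\le 14$ and $\lfloor\alpha s_{k+2}\rfloor\ge 17$; the arithmetic is the same.
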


\begin{remark} 
For sufficiently large $n$ we have 
$$\left\lfloor \left(\frac{3}{2}\right)^n\right\rfloor<\left\lfloor \left(\frac{3}{2}\right)^{n+1}\right\rfloor\le 2\left\lfloor \left(\frac{3}{2}\right)^n\right\rfloor.$$
So, Theorem \ref{thm1} gives a positive answer to  Problem \ref{problem1} with a stronger form. Actually, our theorem can be slightly strengthen with $3/2$ replaced by a general number $\gamma\in [3/2,2)$, whereas, in the latter case, the exceptions $2^3$ or $2^4$ in the theorem should be substituted by two other suitable integers $2^j$ or $2^{j+1}$ with $j$ depending on $\gamma$.
\end{remark}

\begin{proof}[Proof of Theorem \ref{thm2-1}]
Suppose that $2^3\in S_\alpha$ for some $0<\alpha<1$, then there is a positive integer $k$ such that
$$2^3=\left\lfloor\alpha\left\lfloor \left(\frac{3}{2}\right)^k\right\rfloor\right\rfloor,$$ 
which means that
$$2^3\le \alpha\left\lfloor \left(\frac{3}{2}\right)^k\right\rfloor<2^3+1,$$
and hence 
$$2^3\alpha^{-1}\le \left\lfloor \left(\frac{3}{2}\right)^k\right\rfloor<\left(2^3+1\right)\alpha^{-1}.$$
Getting removed of the floor function, it follows that
$$2^3\alpha^{-1}\le\left(\frac{3}{2}\right)^k<\left(2^3+1\right)\alpha^{-1}+1,$$
from which we deduce that
\begin{align}\label{new1}
12\alpha^{-1}\le\left(\frac{3}{2}\right)^{k+1}<\left(12+\frac32\right)\alpha^{-1}+\frac32
\end{align}
and
\begin{align}\label{new2}
18\alpha^{-1}\le\left(\frac{3}{2}\right)^{k+2}<\left(18+\frac94\right)\alpha^{-1}+\frac94.
\end{align}
From the right--hand side inequality of (\ref{new1}), we have
$$\alpha\left\lfloor\left(\frac{3}{2}\right)^{k+1}\right\rfloor<12+\frac32+\frac{3}{2}\alpha<15.$$
From the left--hand side inequality of (\ref{new2}), we have
$$\alpha\left\lfloor\left(\frac{3}{2}\right)^{k+2}\right\rfloor>\alpha(18\alpha^{-1}-1)>17.$$
Thus,
$$\left\lfloor\alpha\left\lfloor\left(\frac{3}{2}\right)^{k+1}\right\rfloor\right\rfloor\le 14 \quad \text{whereas} \quad \left\lfloor\alpha\left\lfloor\left(\frac{3}{2}\right)^{k+2}\right\rfloor\right\rfloor\ge 17.$$
It follows that $2^4\not\in S_\alpha$ provided that $2^3\in S_\alpha$.
\end{proof}

\section*{Acknowledgments}
The author would like to thank Professor Yong--Gao Chen for his helpful comments for an early draft.  

The author is supported by National Natural Science Foundation of China (Grant No. 12201544), Natural Science Foundation of Jiangsu Province in China (Grant No. BK20210784) and China Postdoctoral Science Foundation (Grant No. 2022M710121),   the foundations of the projects "Jiangsu Provincial Double--Innovation Doctor Program'' (Grant No. JSSCBS20211023) and "Golden  Phoenix of the Green City--Yang Zhou'' to excellent PhD (Grant No. YZLYJF2020PHD051).


\begin{thebibliography}{KMP}
\bibitem{Ap} R. Ap\'ery, {\it Irratinalit\'e de $\zeta(2)$ et $\zeta(3)$,} Asterisque {\bf 61} (1979), 11--13.
\bibitem{Beukers} F. Beukers, {\it A note on the irrationality of $\zeta(2)$ and $\zeta(3)$,} Bull. London. Math. Soc. {\bf 11} (1978), 268--272.
\bibitem{Bi} B. J. Birch, {\it Note on a problem of Erd\H os,} Proc. Cambridge Philos. Soc. {\bf 55} (1959), 370--373.
\bibitem{Br} J. L. Brown Jr., {\it Note on complete sequences of integers,} Amer. Math. Monthly {\bf 68} (1961), 557--560.
\bibitem{Bu} S. A. Burr, in: P. Erd\H os, A. R\'enyi, V. T.  S\'os (Eds.) {\it Combinatorial Theory and its Applications III,} Coll. Math. Soc. J. Bolyai {\bf 4} North--Holland Publ. Comp. (Amsterdam--London) 1970.
\bibitem{CF1} Y.--G. Chen, J.--H. Fang, {\it Remark on the completeness of an exponential type sequence,} Acta Math. Hungar. {\bf 136} (2012), 189--195.
\bibitem{CF2} Y.--G. Chen, J.--H. Fang, {\it Hegyv\'ari's theorem on complete sequences,} J. Number Theory {\bf 133} (2013), 2857--2862.
\bibitem{CR} Y.--G. Chen, I. Z. Ruzsa, {\it On the irrationality of certain series,} Period. Math. Hungar. {\bf 38} (1999), 31--37.
\bibitem{Da} H. Davenport, {\it Multiplicative Number Theory,} Second edition, Graduate Texts in Mathematics 74, Springer-Verlag, New York, 1980.
\bibitem{Erdos} P. Erd\H os, {\it Sur l'irrationalit\'e d'une certaine s\'erie,} C. R. Acad. Sci., Paris, S\'er. I {\bf 292}, (1982), 765--768.
\bibitem{FL} J.--H. Fang, X.--Y. Liu, {\it On complete sequences,} Acta Math. Hungar. {\bf 148} (2016), 211--221.
\bibitem{Gr} R. L. Graham, {\it Complete sequences of polynomial values,} Duke Math. J. {\bf 31} (1964), 275--285.
\bibitem{He1} N. Hegyv\'ari, {\it Additive properties of sequences of multiplicatively perturbed sequence values,} J. Number Theory {\bf 54} (1995), 248--260.
\bibitem{He2} N. Hegyv\'ari, {\it Complete sequences in $\mathbb{N}^2$,} Eur. J. Combin. {\bf 17} (1996), 741--749.
\bibitem{He3} N. Hegyv\'ari, {\it On the completeness of an exponential type sequence,}  Acta Math. Hungar. {\bf 86} (2000), 127--135.
\bibitem{MC} W.--X. Ma, Y.--G. Chen, {\it Hegyv\'ari's theorem on complete sequences, II.} Acta Arith. {\bf 203} (2022), 307--318.
\bibitem{Murty2} M.R. Murty, P. Rath, {\it Transcendental Numbers,} Springer, New York, 2014.
\bibitem{Ri} T. Rivoal, {\it La fonction z\^eta de Riemann prend une infinit\'e de values irrationnelles aux entiers impairs,} C. R. Acad. Sci., Paris, S\'er. I Math. {\bf 331} (2000), 267--270.
\bibitem{Ri2} T. Rivoal, {\it Irrationalit\'e d'au moins un des neuf nombres $\zeta(5),\zeta(7),...,\zeta(21)$,} Acta Arith. {\bf 103} (2001), 157--167. 
\bibitem{Zu} W. Zudilin, {\it One of the numbers $\zeta(3),\zeta(5),\zeta(7),\zeta(9)$ is irrational,} (Russian) Uspekhi Mat. Nauk, {\bf 56} (2001), 149--150.
\end{thebibliography}
\end{document}